\tikzstyle{vertex}=[circle,fill=black,inner sep=2pt]
\tikzstyle{vertrect}=[draw,rectangle,inner sep=2pt]
\tikzstyle{vertdia}=[draw,diamond,inner sep=2pt]
\date{}
\theoremstyle{plain}
      \newtheorem{theorem}{Theorem}[section]
            \newtheorem{claim}[theorem]{Claim}
      \newtheorem{problem}[theorem]{Problem}
      \newtheorem{corollary}[theorem]{Corollary}
      \newtheorem{conjecture}[theorem]{Conjecture}
\theoremstyle{definition}
\theoremstyle{remark}
	\newcommand{\RR}{{\mathbb R}}
\def\twr{\mbox{\rm twr}}
\title{The Erd\H os-Szekeres problem and an induced Ramsey question}
\author{Dhruv Mubayi\thanks{Department of Mathematics, Statistics, and Computer Science, University of Illinois, Chicago, IL, 60607 USA.  Research partially supported by NSF grant DMS-1763317. Email: {\tt mubayi@uic.edu}} \and Andrew Suk\thanks{Department of Mathematics,  University of California at San Diego, La Jolla, CA, 92093 USA. Supported by an NSF CAREER award and an Alfred Sloan Fellowship. Email: {\tt asuk@ucsd.edu}.}}
\begin{document}

\maketitle

\begin{abstract}

Motivated by the Erd\H os-Szekeres convex polytope conjecture in $\RR^d$, we initiate the study of the following induced Ramsey problem for hypergraphs.  Given integers $ n > k \geq 5$, what is the minimum integer $g_k(n)$ such that any $k$-uniform hypergraph on $g_k(n)$ vertices with the property that any set of $k + 1$ vertices induces 0, 2, or 4 edges, contains an independent set of size $n$.  Our main result shows that $g_k(n) > 2^{cn^{k-4}}$, where $c = c(k)$.
\end{abstract}

\section{Introduction}

Given a finite point set $P$ in $d$-dimensional Euclidean space $\mathbb{R}^d$, we say that $P$ is in \emph{general position} if no $d+1$ members lie on a common hyperplane.  Let $ES_d(n)$ denote the minimum integer $N$, such that any set of $N$ points in $\mathbb{R}^d$ in general position contains $n$ members in \emph{convex position}, that is, $n$ points that form the vertex set of a convex polytope.  In their classic 1935 paper, Erd\H os and Szekeres~\cite{ES35} proved that in the plane, $ES_2(n) \leq 4^n$.  In 1960, they \cite{ES60} showed that $ES_2(n) \geq 2^{n-2} + 1$ and conjectured this to be sharp for every integer $n \geq 3$.  Their conjecture has been verified for $n \leq 6$~\cite{ES35,SP}, and determining the exact value of $ES_2(n)$ for $n \geq 7$ is one of the longest-standing open problems in Ramsey theory/discrete geometry.  Recently \cite{S17}, the second author asymptotically verified the Erd\H os-Szekeres conjecture by showing that $ES_2(n) = 2^{n + o(n)}$.

In higher dimensions, $d\geq 3$, much less is known about $ES_d(n)$.  In \cite{K01}, K\'arolyi showed that projections into lower-dimensional spaces can be used to bound these functions, since most generic projections preserve general position, and the preimage of a set in convex position must itself be in convex position. Hence, $ES_d(n) \leq ES_2(n)= 2^{n + o(n)}$.  However, the best known lower bound for $ES_d(n)$ is only on the order of $2^{cn^{1/(d-1)}}$, due to Ḱ\'arolyi and Valtr \cite{KV03}.  An old conjecture of F\" uredi (see Chapter 3 in \cite{mat}) says that this lower bound is essentially the truth.
\begin{conjecture}\label{rd}

For $d\geq 3$, $ES_d(n) = 2^{\Theta(n^{1/(d-1)})}$.
\end{conjecture}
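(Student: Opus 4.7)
Since Conjecture \ref{rd} is a two-sided asymptotic and the lower bound $ES_d(n) \geq 2^{\Omega(n^{1/(d-1)})}$ is already due to K\'arolyi and Valtr, my plan concentrates on the upper bound $ES_d(n) \leq 2^{O(n^{1/(d-1)})}$. The base case $d=2$ is the recent near-tight result $ES_2(n) = 2^{n+o(n)}$, which already matches the conjectured exponent $n^{1/(d-1)} = n$, so the question is really about transferring this sharpness to higher dimensions.

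The natural first attempt is combinatorial: given $N = 2^{Cn^{1/(d-1)}}$ points in general position in $\mathbb{R}^d$, associate to the configuration a $(d+1)$-uniform hypergraph $H$ whose edges encode the orientations of $(d+1)$-tuples. One would choose the encoding so that (a) any $n$-element independent set in $H$ corresponds to $n$ points in convex position, and (b) every $(d+2)$-subset induces $0$, $2$, or $4$ edges, a constraint that should fall out of the Radon-type identity governing orientations of $d+2$ points in $\mathbb{R}^d$. Under this reduction, establishing the conjecture would amount to proving $g_{d+1}(n) \leq 2^{O(n^{1/(d-1)})}$.

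The main obstacle is that this route is blocked by the principal theorem of the present paper: we prove $g_k(n) > 2^{cn^{k-4}}$, and for $k = d+1 \geq 5$ this vastly exceeds the conjectured target $2^{O(n^{1/(d-1)})}$. Consequently, no proof of the conjecture can rest purely on the induced Ramsey encoding; the $(0,2,4)$ constraint on $(d+2)$-tuples is simply too weak to force near-optimal independent sets. A successful proof must bring in additional geometric input -- for instance, iterated generic projection in the style of K\'arolyi (to leverage the tight planar bound along many carefully chosen coordinate directions simultaneously), partitioning arguments by hyperplanes together with a density increment, or finer structural theorems about order types that go beyond the orientation information of single $(d+1)$-tuples.

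Identifying the right geometric extra structure that closes the gap between the combinatorial lower bound $g_{d+1}(n) > 2^{cn^{d-3}}$ and the conjectured geometric upper bound $ES_d(n) \leq 2^{O(n^{1/(d-1)})}$ is, in my view, where the essential difficulty lies, and it explains why the conjecture has resisted attack despite the near-resolution of the planar case.
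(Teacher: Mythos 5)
The statement you were asked about is Conjecture~\ref{rd}, an open conjecture attributed to F\"uredi; the paper does not prove it and offers no argument for it beyond citing the K\'arolyi--Valtr lower bound and discussing why one natural upper-bound strategy fails. Your submission is likewise not a proof: it correctly restates the known lower bound, correctly observes that the reduction to the induced Ramsey function $g_k(n)$ cannot yield the conjectured upper bound because of the paper's main theorem, and then stops at the point where an actual argument would have to begin (``a successful proof must bring in additional geometric input''). That is an accurate assessment of the state of the problem, but it establishes nothing; the conjectured upper bound $ES_d(n) \leq 2^{O(n^{1/(d-1)})}$ remains entirely unproved in your write-up, as it does in the literature.

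Two smaller points on the setup you sketch. First, the uniformity is off: in the paper's encoding the edges are the $(d+2)$-tuples \emph{not} in convex position, so the relevant hypergraph is $(d+2)$-uniform, i.e.\ $k = d+2$, and Motzkin's observation is that every set of $k+1 = d+3$ points induces $0$, $2$, or $4$ such edges. Correspondingly the obstruction from Theorem~\ref{es} reads $g_{d+2}(n) > 2^{cn^{d-2}}$, not $g_{d+1}(n) > 2^{cn^{d-3}}$. Second, the best known upper bound in dimension $d \geq 3$ is only $ES_d(n) \leq ES_2(n) = 2^{n+o(n)}$ via projection, which is very far from $2^{O(n^{1/(d-1)})}$; iterating projections does not obviously improve this, since a single generic projection already loses the higher-dimensional structure. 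So the gap you would need to close is genuine and large, and nothing in your proposal closes it.
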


It was observed by Motzkin \cite{Mot} that any set of $d + 3$ points in $\mathbb{R}^d$ in general position contains either 0, 2, or 4 $(d + 2)$-tuples not in convex position.   By defining a hypergraph $H$ whose vertices are $N$ points in $\mathbb{R}^d$ in general position, and edges are $(d+ 2)$-tuples not in convex position, then every set of $k + 1$ vertices induces 0, 2, or 4 edges.  Moreover, by Carath\'eodory's theorem (see Theorem 1.2.3 in \cite{mat}), an independent set in $H$ would correspond to a set of points in convex position.  This leads us to the following combinatorial parameter.

 Let $g_k(n)$ be the minimum integer $N$ such that any $k$-uniform hypergraph on $N$ vertices with the property that every set of $k + 1$ vertices induces 0, 2, or 4 edges, contains an independent set of size $n$.  For $k \geq 5$, the geometric construction of K\'arolyi and Valtr \cite{KV03} mentioned earlier implies that $$g_k(n) \geq ES_{k-2}(n) \geq 2^{cn^{1/(k-3)}},$$ where $c = c(k)$.  One might be tempted to prove Conjecture~\ref{rd} by establishing a similar upper bound for $g_k(n)$.  However, our main result shows that this is not possible.

\begin{theorem} \label{es} For each $n\ge k \ge 5$ there exists $c  = c(k)>0$ such that $g_k(n) > 2^{c n^{k-4}}$.
\end{theorem}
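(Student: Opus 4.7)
My plan is to construct, for each $k\geq 5$, an explicit $k$-uniform hypergraph $H_k$ on $N_k \geq 2^{c\, n^{k-4}}$ vertices that satisfies the $\{0,2,4\}$ edge condition and has no independent set of size $n$. The natural strategy is induction on $k$: a base case at $k=5$ on $N_5 \sim 2^{cn}$ vertices, followed by an $n$-fold product amplification $k \mapsto k+1$ taking $V_{k+1}=V_k^n$, so that $|V_{k+1}|=|V_k|^n=2^{cn^{k-3}}$, which matches the target exponent exactly.

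For the base case I would try an Erd\H{o}s--Hajnal--Rado style stepping-up rule. Order $V_5=\{0,1\}^{cn}$ lexicographically; for a $5$-subset $x_1<_{\mathrm{lex}}\cdots<_{\mathrm{lex}}x_5$, let $\delta_i$ be the largest coordinate on which $x_i,x_{i+1}$ differ, and declare $\{x_1,\dots,x_5\}$ an edge iff $(\delta_1,\delta_2,\delta_3,\delta_4)$ satisfies a specified pattern (for instance, a monotonicity condition possibly combined with a parity bit). The pattern must be chosen so that in every $6$-subset the induced edge count is exactly $0$, $2$, or $4$---mirroring Motzkin's observation in $\RR^3$---while also precluding any independent set of size $n$ via an Erd\H{o}s--Szekeres-type extraction of a long monotone $\delta$-sequence. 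For the inductive step $k\mapsto k+1$ I would take $V_{k+1}=V_k^n$ with an inherited product order, and define $(k+1)$-edges by a coordinate-wise rule: for each $(k+1)$-subset $\{u_0,\dots,u_k\}$, extract a distinguished coordinate (say, the first one on which the $u_i$ are still distinguishable) and apply an $H_k$-based edge rule there. This rule must transfer the $\{0,2,4\}$ property from $H_k$ to $H_{k+1}$ and propagate the independence bound by a pigeonhole projection: any candidate independent set of size $n$ in $V_k^n$ would, by pigeonhole on the distinguished coordinate, yield a large independent set inside a single copy of $V_k$, contradicting the inductive hypothesis.

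The main obstacle is the simultaneous verification of the $\{0,2,4\}$ property and the independence bound. Verifying $\{0,2,4\}$ requires an exhaustive case analysis over the combinatorial types of $(k+1)$- and $(k+2)$-subsets under the chosen rule, and the rule must be engineered so that every case yields $0$, $2$, or $4$ edges---a highly constrained design problem, made possible only because the condition permits a small amount of flexibility (three counts rather than one). Preserving the independence bound through the $n$-fold product requires a Ramsey-type projection argument, carefully arranged so that pigeonhole or Dilworth-style extractions produce the needed contradiction. Finding a single rule that meets both demands at every $k$ is the heart of the proof; it is precisely this combinatorial balancing act that pushes the lower bound beyond the much weaker geometric bound $2^{cn^{1/(k-3)}}$.
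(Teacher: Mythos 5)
There is a genuine gap: your proposal is a plan that explicitly defers the two things that constitute the proof. You give no candidate edge rule for the base case $k=5$ and no verification of the $\{0,2,4\}$ condition for any rule, and you yourself identify "finding a single rule that meets both demands" as the heart of the matter. Beyond the missing construction, the amplification step $V_{k+1}=V_k^n$ is specifically problematic. First, the independence argument fails: $n$ points of $V_k^n$ can pairwise differ in pairwise distinct coordinates, so no pigeonhole on a single "distinguished coordinate" recovers a large independent set inside one copy of $V_k$; product-type constructions generally \emph{multiply} independence numbers (e.g.\ $\alpha(G[H])=\alpha(G)\alpha(H)$ for lexicographic products), so keeping $\alpha \le n$ while raising the vertex set to the $n$-th power would require a mechanism you have not supplied. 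Second, the $\{0,2,4\}$ condition is extremely rigid (every $(k+1)$-set must induce an \emph{even} number of edges, and at most $4$), and there is no reason a coordinate-extraction rule on a product, or a stepping-up rule on $\{0,1\}^{cn}$ with $\delta$-patterns, would satisfy it; parity of the induced edge count is not something stepping-up constructions control.

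The paper's route is quite different and worth internalizing because it shows how that parity rigidity can be engineered rather than checked case by case. It takes a single random construction for all $k$: color the $(k-3)$-subsets of $[N]$, $N = 2^{cn^{k-4}}$, with the $\binom{k-1}{2}$ colors $\{i,j\}$ uniformly at random via $\phi$; let $G$ be the $(k-1)$-uniform hypergraph of tuples $f$ on which $\phi$ agrees with the canonical position-labeling $\chi_f$ on every $(k-3)$-subset; and define $H$ to be the $k$-sets $e$ with $|G[e]|$ odd. A double-counting identity then makes $|H[S]|$ automatically even for every $(k+1)$-set $S$, and two structural claims about $G$ (no $k$-set contains $3$ edges of $G$, and $G$ contains no three edges forming the configuration $T_3$) force $|H[S]| \le 4$. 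Finally $\alpha(H)\le n$ follows from a first-moment argument: a partial Steiner $(n,k,k-3)$ system inside a candidate independent $n$-set gives $\Omega(n^{k-3})$ essentially independent non-edge events, so $\binom{N}{n}(1-p)^{c'n^{k-3}} < 1$ precisely when $N = 2^{cn^{k-4}}$ with $c$ small --- this is where the exponent $k-4$ comes from, not from an $n$-fold product. To salvage your approach you would need, at minimum, an explicit base construction verified to satisfy the $\{0,2,4\}$ condition and an amplification step with a correct independence-preservation argument; as written, neither exists.
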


In the other direction, we can bound $g_k(n)$ from above as follows.  For $n \geq k \geq 5$ and $t\leq k$, let $h_k(t,n)$ be the minimum integer $N$ such that any $k$-uniform hypergraph on $N$ vertices with the property that any set of $k + 1$ vertices induces at most $t$ edges, contains an independent set of size $n$.  In \cite{MS18}, the authors proved the following.

\begin{theorem}[\cite{MS18}]

For $ k\geq 5$ and $t \leq k$, there is a positive constant $c' = c'(k,t)$ such that

$$h_k(t,n) \leq \twr_{t}(c'n^{k - t}\log n),$$

\noindent where $\twr$ is defined recursively as $\twr_1(x) = x$ and $\twr_{i + 1}(x)   = 2^{\twr_{i}(x)}$.
\end{theorem}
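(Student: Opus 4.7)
The task is to construct, for each $n \ge k \ge 5$, a $k$-uniform hypergraph $H$ on more than $2^{c n^{k-4}}$ vertices such that every $(k+1)$-subset of vertices induces $0$, $2$, or $4$ edges, while $H$ has no independent set of size $n$. My plan is to proceed by induction on $k$, with the base case $k = 5$ handled probabilistically and an inductive step that increases $k$ by $1$ while multiplying $\log |V|$ by $n$ via a product construction.

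\textbf{Base case $k=5$.} The condition ``every $6$-set induces an even number of edges'' is exactly the mod-$2$ cocycle condition: any such hypergraph can be written as $E(H_f) = \{S \in \binom{V}{5} : (\delta f)(S) \equiv 1 \pmod 2\}$ for some $f \colon \binom{V}{4} \to \{0,1\}$, where $(\delta f)(S) = \sum_{v \in S} f(S \setminus v)$. The evenness then follows automatically from $\delta^{2} \equiv 0 \pmod 2$, and the stronger ``$\le 4$'' condition amounts to forbidding, in every $6$-set $T$, the local pattern in which all six values $(\delta f)(T \setminus v)$ are $1$. I would take $f$ to be a random $\{0,1\}$-valued function on $\binom{V}{4}$ with $|V| \sim 2^{cn}$ and each $4$-set marked independently with probability $p$, then apply the Lov\'asz Local Lemma (together with an alteration step) to simultaneously (a) destroy the rare local ``count-$6$'' violations inside individual $6$-sets, and (b) ensure that no $n$-subset of $V$ is independent in the resulting hypergraph.

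\textbf{Inductive step.} Given a $k$-uniform hypergraph $H_k$ on $N_k \ge 2^{cn^{k-4}}$ vertices with the property and with $\alpha(H_k) < n$, I would build $H_{k+1}$ on $V_{k+1} = V_k^{\,n}$, so that $|V_{k+1}| = N_k^{\,n} \ge 2^{cn^{k-3}}$. Declare a $(k+1)$-subset $S \subseteq V_{k+1}$ an edge of $H_{k+1}$ when a parity condition, phrased through the cocycle description of $H_k$ applied to the coordinatewise projections of $S$, is satisfied. The rule must be designed so that (i) every $(k+2)$-subset of $V_{k+1}$ induces $0$, $2$, or $4$ edges of $H_{k+1}$, its count being inherited from a corresponding $(k+2)$-subset of $V_k$; and (ii) any independent set of $H_{k+1}$ of size $n$ projects (in some coordinate) injectively to an independent $n$-set of $H_k$, contradicting $\alpha(H_k) < n$.

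\textbf{Main obstacle.} The hardest part is preserving the ``$\le 4$'' portion of the property, which is far more rigid than the evenness. In the base case, the probabilistic argument must show that the ``count $= 6$'' bad events inside $6$-sets are sparse enough to be surgically removed (via LLL-style alteration) without destroying the independence-number bound; the tension between these two requirements is the core of the base-case proof. In the inductive step, the product edge rule has to prevent contributions from the different coordinate projections from compounding into edge counts of $6$ or more inside a $(k+2)$-set, so a naive lift of $H_k$ to $V_k^{\,n}$ will not work; one must instead use a subtle parity/cocycle-based rule engineered for this purpose. Designing this rule and checking the two bookkeeping properties above is where I expect the bulk of the technical effort to lie.
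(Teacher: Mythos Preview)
Your proposal does not address the stated theorem. The statement you were asked to prove is the \emph{upper bound} $h_k(t,n)\le \twr_t(c'n^{k-t}\log n)$, which asserts that every $k$-uniform hypergraph in which each $(k+1)$-set spans at most $t$ edges contains an independent set of size $n$ once the vertex count exceeds the tower bound. (This result is quoted from~\cite{MS18} and is not re-proved in the present paper.) Your plan instead attacks Theorem~\ref{es}, the \emph{lower bound} $g_k(n)>2^{cn^{k-4}}$: you are constructing a hypergraph with the $0,2,4$ property and small independence number. These are opposite directions; nothing in your write-up bears on the stated inequality.

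Even read as an attempt at Theorem~\ref{es}, your plan diverges from the paper's argument and carries a genuine gap. The paper gives a single direct construction valid for all $k\ge 5$: a random coloring $\phi:\binom{[N]}{k-3}\to\binom{[k-1]}{2}$ determines a $(k-1)$-uniform hypergraph $G$, and $H$ consists of those $k$-sets $e$ with $|G[e]|$ odd; three short deterministic claims then force $|H[S]|\in\{0,2,4\}$ for every $(k+1)$-set $S$, and a first-moment computation (no Local Lemma, no alteration) gives $\alpha(H)\le n$. Your proposed induction on $k$ via a product $V_{k+1}=V_k^{\,n}$ is not merely different but incomplete: the edge rule on $V_k^{\,n}$ is never specified, and the key claim that an independent $n$-set in $H_{k+1}$ must project injectively in some coordinate to an independent $n$-set of $H_k$ is unsupported and in general false (an $n$-set in a product need not project injectively in any coordinate, and even when it does there is no mechanism forcing the image to be independent absent a concrete edge rule). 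Until the rule is written down and properties (i) and (ii) are verified, the inductive step is a hope rather than an argument.
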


Hence, we have the following corollary.

\begin{corollary}
For $k \geq 5$, there is a constant $c' = c'(k)$ such that $$g_k(n) \leq h_k(4,n) \leq 2^{2^{2^{c'n^{k-4}\log n}}}.$$

\end{corollary}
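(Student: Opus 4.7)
The plan is to establish the two claimed inequalities separately, each of which follows immediately from material already introduced in the excerpt. The first inequality, $g_k(n) \leq h_k(4,n)$, is a containment-of-classes observation comparing the defining properties of the two extremal functions; the second, $h_k(4,n) \leq 2^{2^{2^{c'n^{k-4}\log n}}}$, is the specialization of the cited theorem of \cite{MS18} to the parameter value $t = 4$.

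For the first inequality, I would argue directly from the definitions. Set $N = h_k(4,n)$ and let $H$ be any $k$-uniform hypergraph on $N$ vertices with the property that every $(k+1)$-subset of vertices induces exactly $0$, $2$, or $4$ edges. Since $\{0, 2, 4\} \subseteq \{0, 1, 2, 3, 4\}$, the hypergraph $H$ also satisfies the weaker condition defining $h_k(4,n)$: every $(k+1)$-subset induces at most $4$ edges. By the definition of $h_k(4,n)$, any such $H$ on $N$ vertices contains an independent set of size $n$. Hence the extremal threshold for the (more restrictive) class appearing in the definition of $g_k(n)$ is at most $N$, giving $g_k(n) \leq h_k(4,n)$.

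For the upper bound on $h_k(4,n)$, I would apply the stated theorem of \cite{MS18} with $t = 4$, which is a valid choice because we assume $k \geq 5 \geq t$. The theorem then yields $h_k(4,n) \leq \twr_4(c'n^{k-4}\log n)$ for some positive constant $c' = c'(k, 4)$, which I would rename $c' = c'(k)$. Unwinding the recursive definition of $\twr$ three times yields $\twr_2(x) = 2^x$, $\twr_3(x) = 2^{2^x}$, and $\twr_4(x) = 2^{2^{2^x}}$, so substituting $x = c'n^{k-4}\log n$ produces exactly the triple-exponential expression in the corollary statement.

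There is essentially no substantive obstacle here, as the argument is a routine combination of two previously recorded facts: strong property implies weak property (so the stronger Ramsey threshold is at most the weaker one), followed by direct invocation of a theorem already stated in the excerpt at a particular value of its parameter.
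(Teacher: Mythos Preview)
Your argument is correct and matches the paper's intent: the corollary is stated there without proof, being an immediate consequence of the definitions (for the first inequality) and the quoted theorem from \cite{MS18} specialized to $t=4$ (for the second). Your write-up simply spells out those two routine steps, so there is nothing to add or correct.
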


It is an interesting open problem to improve either the upper or lower bounds for $g_k(n)$.

 \begin{problem}

 Determine the tower growth rate for $g_k(n)$.

 \end{problem}

 Actually, this Ramsey function can be generalized further as follows: for every $S \subset \{0, 1, \ldots, k\}$, define $g_k(n, S)$ to be the minimum integer $N$ such that any $N$-vertex $k$-uniform hypergraph with the property that every set of $k+1$ vertices induces $s$ edges for some $s \in S$, contains an independent set of size $n$. General results for $g_k(n,S)$ may shed light on classical Ramsey problems, but it appears difficult to determine even the tower height for any nontrivial cases.

\section{Proof of Theorem \ref{es}}
 Let $k \ge 5$ and $N=2^{c n^{k-4}}$ where $c=c_k>0$ is sufficiently small to be chosen later.
 We are to produce a $k$-uniform hypergraph $H$ on $N$ vertices with $\alpha(H) \le n$ and every $k+1$ vertices of $H$ span 0, 2, or 4 edges.
  Let $\phi:{[N] \choose k-3}\rightarrow {[k-1] \choose 2}$ be a random ${k-1 \choose 2}$-coloring,  where each color appears on each $(k-3)$-tuple independently with probability $1/{k-1 \choose 2}$.   For $f= (v_1,\ldots, v_{k-1}) \in {[N]\choose k-1}$, where $v_1<v_2<\cdots <v_{k-1}$, define the function $\chi_f: {f \choose k-3} \rightarrow {[k-1] \choose 2}$ as follows:
  for all $\{i,j\} \in {[k-1]\choose 2}$, let
$$\chi_f(f\setminus\{v_i, v_j\})=\{i,j\}.$$

 We define the $(k-1)$-uniform hypergraph $G$, whose vertex set is $[N]$, such that
$$G=G_{\phi}:=\left\{f \in {[N] \choose k-1}: \phi(f\setminus\{u,v\})=\chi_f(f\setminus\{u,v\})
 \text{ for all } \{u,v\} \in {f \choose 2}\right\}.$$
For example, if $k=4$ (which is excluded for the theorem but we allow it to illustrate this construction) then $\phi:[N] \rightarrow \{12, 13, 23\}$ and for $f = (v_1,v_2,v_3)$, where $v_1<v_2<v_3$, we have  $f \in G$ iff $\phi(v_1)=23, \phi(v_2)=13$, and $\phi(v_3)=12$.

Finally, we define  the $k$-uniform hypergraph $H$, whose vertex set is $[N]$, such that
$$H=H_{\phi}:=\left \{e \in {[N]\choose k}: |G[e]| \hbox{ is odd}\right\}.$$

\begin{claim}\label{cc1} $|H[S]|$ is even for every $S \in {[N] \choose k+1}$.\end{claim}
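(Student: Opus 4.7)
The plan is to prove this by a short double-counting argument modulo $2$. Fix $S \in \binom{[N]}{k+1}$. By the definition of $H$, the parity of $|H[S]|$ equals the parity of the sum
\[
\Sigma(S) \;:=\; \sum_{e \in \binom{S}{k}} |G[e]|,
\]
since each $e \in \binom{S}{k}$ contributes $1 \pmod 2$ to $\Sigma(S)$ precisely when $e \in H$. So it suffices to show $\Sigma(S)$ is even.

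I would reinterpret $\Sigma(S)$ by switching the order of summation: it counts pairs $(e,f)$ with $f \in G$, $f \subset e$, and $e \in \binom{S}{k}$. For each fixed $f \in G$ with $f \subset S$, the number of $k$-subsets $e$ of $S$ containing $f$ is exactly $|S \setminus f| = (k+1)-(k-1) = 2$. Therefore
\[
\Sigma(S) \;=\; 2\,|G[S]|,
\]
which is even. Hence $|H[S]|$ is even, as desired.

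The main (and really only) ingredient is the ``sandwich'' arithmetic: the $(k-1)$-edges of $G$ sit inside $k$-sets, which in turn sit inside a $(k+1)$-set, and the gap $|S|-|f|=2$ is exactly what makes the double count vanish mod $2$. Nothing about the random coloring $\phi$ or the explicit structure of $G$ or $H$ is used here; the parity of $|H[S]|$ is forced purely by how $H$ was built from $G$. There is no real obstacle: the argument is a one-line inclusion count, and it is precisely this parity property that will later let Theorem~\ref{es} invoke the ``$0,2,$ or $4$ edges'' hypothesis (combined with a separate upper bound $|H[S]|\le 4$ for $S$ of size $k+1$, which presumably appears in the next claim).
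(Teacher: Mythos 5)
Your proof is correct and is essentially the same as the paper's: both rest on the double-counting identity $\sum_{e \in \binom{S}{k}} |G[e]| = 2|G[S]|$ together with the observation that $e \in H$ exactly when $|G[e]|$ is odd. The only cosmetic difference is that you argue directly modulo $2$ while the paper phrases the same computation as a proof by contradiction.
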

\begin{proof} Let $S \in {[N] \choose k+1}$ and suppose for contradiction that $|H[S]|$ is odd.
Then
$$2|G[S]| =\sum_{f \in G[S]}2 =\sum_{f \in G[S]} \sum_{e \in {S \choose k} \atop  e \supset f} 1= \sum_{e \in {S \choose k}}
|G[e]| = \sum_{e \not\in H[S]} |G[e]| +  \sum_{e \in H[S]} |G[e]|.$$
The first sum on the RHS above is even by definition of $H$ and the second sum is odd by definition of $H$ and the assumption that $|H[S]|$ is odd. This contradiction completes the proof.
\end{proof}

\begin{claim}\label{cc2} $|G[e]|\le 2$  for every $e \in {[N] \choose k}$.\end{claim}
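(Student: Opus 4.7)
The plan is to fix an arbitrary $k$-set $e = \{v_1 < v_2 < \cdots < v_k\}$ and show that at most two of its $(k-1)$-subsets $e_i := e \setminus \{v_i\}$ lie in $G$. The core observation is that any $(k-3)$-subset $T \subset e$ is contained in exactly three of the $e_i$'s, namely those $i$ for which $v_i \in e \setminus T$. If $e_i \in G$, then $\phi(T) = \chi_{e_i}(T)$ is forced to be the pair of positions, within the ordered tuple $e_i$, of the two vertices of $(e \setminus T) \setminus \{v_i\}$.

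First I will derive a position-shift constraint. Suppose $e_a, e_b \in G$ with $a < b$, and for $d \in [k] \setminus \{a,b\}$ set $T_d = e \setminus \{v_a, v_b, v_d\}$. Computing $\chi_{e_a}(T_d)$ and $\chi_{e_b}(T_d)$ -- using that removing $v_a$ shifts the rank of $v_b$ down by one while removing $v_b$ leaves the rank of $v_a$ unchanged -- and equating them yields three sub-cases. If $d < a$ the identity $\{b-1, d\} = \{a, d\}$ forces $b = a+1$; if $a < d < b$ the identity $\{b-1, d-1\} = \{a, d\}$ forces $d = a+1$ and $b = a+2$; and if $d > b$ the identity $\{b-1, d-1\} = \{a, d-1\}$ again forces $b = a+1$.

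Next I will argue that, for $k \ge 5$, the middle sub-case cannot alone account for every admissible $d$: since $|[k] \setminus \{a,b\}| \ge k-2 \ge 3$, at least one $d$ must satisfy $d < a$ or $d > b$, and that $d$ forces $b = a+1$. Hence whenever $e_a, e_b \in G$ with $a < b$ we necessarily have $b = a+1$.

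Finally, I will assume for contradiction that three subsets $e_a, e_b, e_c \in G$ exist with $a < b < c$. Applying the previous step to the pairs $(a,b)$, $(b,c)$, and $(a,c)$ yields $b = a+1$, $c = b+1 = a+2$, and $c = a+1$ simultaneously, which is absurd; this completes the proof. The one delicate point is the bookkeeping of position shifts when translating between the ambient ordering on $e$ and the orderings on $e_a$ and $e_b$, since the shift for a vertex $v_d$ differs depending on whether $d$ lies below, between, or above $a$ and $b$; every other step is immediate.
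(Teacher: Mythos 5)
Your overall strategy is sound and the position computations in all three sub-cases are correct; the underlying mechanism is the same as the paper's (force $\phi(T)=\chi_{e_a}(T)=\chi_{e_b}(T)$ on a $(k-3)$-set $T$ common to two edges of $G$ and read off a contradiction from the rank shifts), but you package it differently: you prove a structural lemma that any two edges $e_a,e_b$ of $G$ inside $e$ must have consecutive indices, whereas the paper works with one carefully chosen $(k-3)$-set for the extreme pair $e_i,e_l$ among three hypothetical edges. Your lemma applied to the single pair $(a,c)$ already finishes, since $a<b<c$ forces $c\ge a+2$; the three applications are redundant but harmless.

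There is, however, one false step in the justification of the lemma. You claim that because $|[k]\setminus\{a,b\}|\ge k-2\ge 3$, some admissible $d$ must satisfy $d<a$ or $d>b$. That inference is invalid: when $a=1$ and $b=k$, every admissible $d$ satisfies $a<d<b$, so no $d$ lands in the outer sub-cases and your argument as written does not deliver $b=a+1$ for that pair. The case is easily dispatched with material you already have --- each of the $k-2\ge 3$ middle values of $d$ would force $d=a+1$ and $b=a+2$, which is impossible both because distinct values of $d$ cannot all equal $a+1$ and because $b=k\ge 5>a+2$ --- so the pair $(1,k)$ simply cannot occur in $G[e]$, and the lemma (hence the claim) survives. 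You should add that one line; with the patch the proof is complete.
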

\begin{proof}  For sake of contradiction, suppose that for $e=(v_1,\ldots, v_k)$, where $v_1<\cdots < v_k$, we have $|G[e]|\ge 3$.
Let  $e_{p}=e\setminus \{v_p\}$ for $p \in [k]$ and suppose that $e_{i}, e_{j}, e_{l} \in G$ with $i<j<l$.  In what follows, we will find a set $S$ of size $k-3$, where $S\subset e_i$ and $S\subset e_l$, such that $\chi_{e_i}(S)\neq \chi_{e_l}(S)$.  This will give us our contradiction since $e_i,e_l \in G$ implies that $\chi_{e_i}(S) = \phi(S) = \chi_{e_l}(S)$.

Let $Y=e\setminus\{v_i, v_j, v_l\}$ and  $Y'=Y\setminus \{\min Y\}$.
 Let us first assume that $i>1$ so that $\min Y=v_1$. In this case,
$$\chi_{e_i}(Y' \cup \{v_j\})=\{1,l-1\},$$
since we obtain $Y' \cup \{v_j\}$ from $e_i$ by removing $\min Y$ and $v_l$ which are the first and
$(l-1)$st elements of $e_i$. Similarly, $$\chi_{e_l}(Y' \cup \{v_j\})= \{1, i\},$$
since we obtain $Y' \cup \{v_j\}$ from $e_l$ by removing $\min Y$ and $v_i$ which are the first and
$i$th elements of $e_i$. Because $l>i+1$, we conclude that $\chi_{e_i}(Y' \cup \{v_j\})\neq \chi_{e_l}(Y' \cup \{v_j\})$ as desired.

Next, we assume that $i=1$ and $\min Y=v_q$ where $q>1$.
 In this case,
$$\chi_{e_i}(Y' \cup \{v_j\})=\{q-1,l-1\},$$
since we obtain $Y' \cup \{v_j\}$ from $e_i$ by removing $v_q$ and $v_l$ which are the $(q-1)$st and
$(l-1)$st elements of $e_i$.  Similarly, $$\chi_{e_l}(Y' \cup \{v_j\})= \{1, q'\}
\qquad \hbox{where} \qquad  q'=q \hbox{ if } q<l \hbox{ and } q'=q-1 \hbox{ if } q>l,$$
since we obtain $Y' \cup \{v_j\}$ from $e_l$ by removing $v_i=v_1$ and $v_q$ which are the first and
$q'$th elements of $e_i$. If $q\neq 2$, then we immediately obtain $\chi_{e_i}(Y' \cup \{v_j\})\neq \chi_{e_l}(Y' \cup \{v_j\})$ as desired. On the other hand, if $q=2$, then $q'=q=2$ as well and $l\ge 4$, so $l-1\neq q'$ and again
$$\chi_{e_i}(Y' \cup \{v_j\})=\{q-1,l-1\} \ne  \{1, q'\} = \chi_{e_l}(Y' \cup \{v_j\}).$$

This completes the proof of the claim.\end{proof}

Let $T_3$ be the $(k-1)$-uniform hypergraph with vertex set $S$ with $|S|=k+1$ and three edges $e_1, e_2, e_3$ such that there are three pairwise disjoint pairs $p_1, p_2, p_3 \in {S \choose 2}$  with $p_i=\{v_i, v_i'\}$ and $e_i=S\setminus p_i$ for $i
\in \{1,2,3\}$.

\begin{claim}\label{cc3} $T_3 \not\subset G$.\end{claim}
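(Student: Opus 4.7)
\textbf{Proof plan for Claim \ref{cc3}.} My plan is to argue by contradiction: assuming that all three edges $e_1, e_2, e_3$ of $T_3$ lie in $G$, I will derive a numerical impossibility directly from the pairwise intersection structure of the three edges. First, since the pairs $p_i$ are pairwise disjoint, $e_i \cap e_j = S \setminus (p_i \cup p_j)$ has exactly $k-3$ elements, so it is the unique $(k-3)$-subset contained in both $e_i$ and $e_j$. Membership in $G$ then forces $\chi_{e_i}(e_i \cap e_j) = \phi(e_i \cap e_j) = \chi_{e_j}(e_i \cap e_j)$ for each of the three pairs $\{i,j\} \subset \{1,2,3\}$.

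Next I would translate these three set-equations into constraints on the indices. Write $S = \{v_1 < \cdots < v_{k+1}\}$ and $p_i = \{v_{a_i}, v_{b_i}\}$ with $a_i < b_i$, and for $\ell \notin \{a_i, b_i\}$ let $\pi_i(\ell) = \ell - [a_i < \ell] - [b_i < \ell]$ denote the rank of $v_\ell$ in $e_i$. Unfolding the definition of $\chi_{e_i}$ gives $\chi_{e_i}(e_i \cap e_j) = \{\pi_i(a_j), \pi_i(b_j)\}$. Because each $\pi_i$ is order-preserving on its domain and $a_j < b_j$, the set equation splits into the two scalar equations $\pi_i(a_j) = \pi_j(a_i)$ and $\pi_i(b_j) = \pi_j(b_i)$.

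I would then run a short case analysis on $\pi_i(a_j) = \pi_j(a_i)$, splitting on whether $a_j$ is less than, between, or greater than $a_i$ and $b_i$, and symmetrically for $a_i$ relative to $\{a_j, b_j\}$. This should force $|a_i - a_j| \in \{1, 2\}$, with the sharpening that whenever the difference equals $2$, the pair with the smaller $a$-value must be ``tight'' in the sense $b = a+1$. Applying this across all three choices of $\{i,j\}$, the distinct integers $a_1, a_2, a_3$ must be pairwise within $2$ of each other, forcing them to be three consecutive integers. Relabeling so that $a_1 < a_2 < a_3$, we get $a_3 - a_1 = 2$, which by the tightness condition forces $b_1 = a_1 + 1$. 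But $a_1 + 1 = a_2$, contradicting $p_1 \cap p_2 = \emptyset$.

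The main obstacle I expect is the case analysis in the third paragraph: the $3 \times 3$ subcases of the relative positions of $a_j$ versus $\{a_i, b_i\}$ and $a_i$ versus $\{a_j, b_j\}$ must be enumerated, with most immediately inconsistent. The real content is correctly identifying which pair is forced to be ``tight'' when $|a_i - a_j| = 2$; this tightness is exactly what clashes with disjointness in the final step.
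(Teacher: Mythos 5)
Your plan is correct and every step of it can be carried out as described. Both your argument and the paper's begin from the same translation: for $i\neq j$ the set $e_i\cap e_j=S\setminus(p_i\cup p_j)$ has size $k-3$, and $e_i,e_j\in G$ forces the pair of positions of $p_j$ inside $e_i$ to equal the pair of positions of $p_i$ inside $e_j$. From there the two proofs close differently. The paper normalizes so that $p_1$ contains the global minimum of $\bigcup_i p_i$ and $p_2$ contains the minimum of $p_2\cup p_3$, and then gets a contradiction from the single comparison of $e_1$ with $e_3$: the smallest position of $p_1$ in $e_3$ is $|Y_1|+1$, while every position of $p_3$ in $e_1$ is at least $|Y_1|+2$ because $Y_1\cup\{v_2\}$ precedes $p_3$. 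Your route instead extracts from \emph{each} of the three pairs $\{i,j\}$ the necessary condition $|a_i-a_j|\in\{1,2\}$, with $b=a+1$ for the pair of smaller $a$-value when the difference is $2$, and then combines the three conditions arithmetically. That local computation is right, and is in fact easier than you anticipate: after assuming $a_i<a_j$ one has $\pi_j(a_i)=a_i$ outright (since $a_i<a_j<b_j$), so the min-equation reads $a_j-a_i=1+[b_i<a_j]$ and the only case split is on whether $b_i<a_j$; no $3\times 3$ enumeration is needed. The endgame is also sound: three distinct integers pairwise within $2$ must be consecutive, the extreme pair has difference exactly $2$, and tightness then forces $b_1=a_1+1=a_2$, violating $p_1\cap p_2=\emptyset$. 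One structural remark: your single-pair condition is genuinely satisfiable (e.g.\ $p_i=\{v_1,v_2\}$, $p_j=\{v_3,v_4\}$ passes both scalar equations), so an argument of your type must invoke all three pairwise constraints; the paper avoids this by choosing in advance the pair of edges whose $a$-values are farthest apart, using the middle pair only as a ``blocker'' that shifts positions in $e_1$. Both proofs are complete; yours yields slightly more information, namely an explicit necessary condition on when two such complementary edges can coexist in $G$.
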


{\emph{Proof}}.  Suppose for a contradiction that there is a subset $S \subset [N]$ of size $k+1$ such that $T_3\subset G[S]$.  Using the notation above, assume without loss of generality that $v_1=\min\cup_i p_i$ and $v_2=\min (p_2 \cup p_3)$. Let $Y=S\setminus(p_1 \cup p_3)$ and note that $Y \in {e_1 \cap e_3\choose k-3}$. Let $Y_1\subset Y$ be the set of elements in $Y$ that are smaller than $v_1$, so we have the ordering
$$Y_1 < v_1 < v_2 < \{v_3, v_3'\}.$$
 Now,
$\chi_{e_1}(Y)$ is the pair of positions of $v_3$ and $v_3'$ in $e_1$. Both of these positions are at least $|Y_1|+2$ as $Y_1 \cup \{v_2\}$ lies before $p_3$. On the other hand, the smallest element of $\chi_{e_3}(Y)$ is $|Y_1|+1$ which is the position of $v_1$ in $e_3$. This shows that
$\chi_{e_1}(Y)  \ne \chi_{e_3}(Y)$, which is a contradiction as both must be equal to
$\phi(Y)$ as $e_1, e_3 \subset G$.
\qed

We now show that every $(k+1)$-set $S \subset [N]$ spans $0,2$ or $4$ edges of $H$. By Claim \ref{cc1}, $|H[S]|$ is even.  Let $G'$ be the graph with vertex set $S$ and edge set $\{S\setminus f: f \in G[S]\}$. So there is a 1-1 correspondence between $G[S]$  and $G'$ via the map $f \rightarrow S\setminus f$.  If $G'$ has a vertex $x$ of degree at least three, then $|G[S\setminus \{x\}]|\ge 3$ which contradicts Claim \ref{cc2}. Therefore $G'$ consists of disjoint paths and cycles. Next, observe that Claim \ref{cc3} implies that  $G'$ does not contain a matching of size three, for the complementary sets of this matching yield a copy of $T_3 \subset G$. This immediately implies that $k=5$, for otherwise we obtain a 3-matching in $G'$. Moreover, the only way to avoid a 3-matching when $k=5$ is for $G'$ to consist of two components each of which contains a two edge path so we may assume that $G'$ is of this form, with paths $abc, uvw$.  If both $uvw$ and $abc$ are triangles, then $|H[S]|=0$ as any 5-set $A$ in $S$ contains precisely two edges of $G'$ from $A$ to $S\setminus A$ which yields $|G[A]|=2$ so $A \notin H$. If both $abc$ and $uvw$ are paths with $deg_{G'}(b)=deg_{G'}(v)=2$, then a similar argument yields $|H[S]|=4$ (the four edges are $abcuv, abcvw, uvwab, uvwbc$) and if one is a triangle and the other is a path then we have $|H[S]|=2$. This concludes the proof that $|H[S]|\in \{0,2,4\}$ for all $S \in {[N] \choose k+1}$.

Let us now argue that $\alpha(H) \le n$, which is a straight-forward application of the probabilistic method. Indeed, we will show that this happens with positive probability and conclude that an $H$ with this property exists. For a given $k$-set, the probability that it is an edge of $H$ is $p<1$,
 where $p$ depends only on $k$. Consequently, the probability that $H$ has an independent set of size $n$ is at most
 $${N \choose n}(1-p)^{c'n^{k-3}}$$
 for some $c'>0$. Note that the exponent $k-3$ above is obtained by taking a partial Steiner $(n,k,k-3)$ system $S$ within a potential independent set of size $n$ and observing that we have independence within the edges of $S$. A short calculation shows that this probability is less than 1 as long as $c$ is sufficiently small.  This completes the proof of Theorem \ref{es} \qed

\end{document}